\theoremstyle{plain}
\newtheorem{theorem}{Theorem}
\newtheorem{lemma}{Lemma}
\newtheorem{corollary}{Corollary}
\newtheorem{assertion}{Assertion}
\theoremstyle{definition}
\newtheorem{definition}{Definition}
\newtheorem{remark}{Remark}
\begin{document}

\title{On the matching arrangement of a graph and properties of its characteristic polynomial}
\author{A.I.Bolotnikov\footnote{Job: Moscow State University, e-mail: bolotnikov-94@mail.ru}
}

\maketitle

\begin{abstract} 
This paper considers a hyperplane arrangement constructed with a subset of a set of all simple paths in a graph. A connection of the constructed arrangement to the maximum matching problem is established. Moreover, the problem of finding the characteristic polynomial is reduced to the case of a connected initial graph. The formula of the characteristic polynomial was also found for the case, when the initial graph is a tree.
\end{abstract}

\textbf{Keywords:} hyperplane arrangement, graphical arrangement, partially ordered set, matroid, maximum matching problem.

\section{Introduction} 

The number of regions of a hyperplane arrangement can be calculated with its characteristic polynomial \cite{zasl}. This result is useful in solving problems of enumerative combinatorics. For example, in \cite{irm}, \cite{irm2}, \cite{irm3}  a special hyperplane arrangement with several related geometric constructions was introduced in order to estimate the number of threshold functions.

In the paper \cite{gre-zasl} the connection between the chromatic polynomial of a graph and the number of acyclic orientations of that graph is depicted as a property of a specific hyperplane arrangement. This arrangement is called a graphical arrangement, and the set of all edges of the initial graph is used for its construction. The characteristic polynomial of the graphical arrangement is equal to the chromatic polynomial of the initial graph, and the number of regions of the graphical arrangement is equal to the number of acyclic orientations in the initial graph.

The first part of this paper contains basic theory on hyperplane arrangements and partially ordered sets. Papers \cite{stan}, \cite{stan2},\cite{annal} include an extensive number of results on hyperplane arrangements and partially ordered sets, and in particular they contain definitions and assertions from the first part.

 In the second part a subset of a set of all simple paths in a graph is used to construct a new hyperplane arrangement called the matching arrangement. It is shown that the matching arrangement is connected to a maximum matching problem, and the possibility that two non-isomorphic graphs have the same matching arrangements is also been explored.

The third part contains several results on the characteristic polynomial of the matching arrangement. The first result is that the characteristic polynomial of the matching arrangement is a graph invariant. The second result is that the characteristic polynomial of an unconnected graph is equal to a product of characteristic polynomials of that graph's connected components. The third result describes specific sets of non-isomorphic graphs with equal characteristic polynomials of matching arrangements. Specifically, for a case when the initial graph is a tree, a formula for the characteristic polynomial is obtained.

\section{Basic theory on hyperplane arrangements}

\begin{definition} \textit{A hyperplane arrangement} is a finite set of affine hyperplanes in some
vector space V. In this paper $V = R^n$.
\end{definition}
\begin{definition}If all hyperplanes intersect in one point, an arrangement is called \textit{central} .
\end{definition}
\begin{definition}
\textit{A region} of an arrangement  is a connected component of
the complement  of the union of hyperplanes.
\end{definition}
\begin{definition}
\textit{A rank} of an arrangement is the dimension
of the space spanned by the normal vectors to the hyperplanes in that arrangement.
\end{definition}
\begin{definition}
For a hyperplane arrangement A a partially ordered set (or poset) $L(A)$ is defined the following way:  elements of L(A) are all
nonempty intersections of hyperplanes in A, and $x \leq y$ in L(A), if $x \supseteq y $.
\end{definition}
Each element x of the set L(A) is an affine subspace V. Its dimension will be denoted as dim(x).

\begin{definition}
\textit{M\"{o}bius function} on a partially ordered set P is defined by following conditions:
\begin{enumerate}
    \item $\mu (x,x) = 1$ $ \forall x \in P$
    \item $\mu (x,y) = -\sum_{x \leq z < y} \mu (x,z)$ $ \forall x<y \in P $
    \item $\mu (x,y) = 0$, if x and y are not comparable
\end{enumerate}.
\end{definition}
If P have a minimal element $\hat 0$, then $\mu (x)$ is a denotion for $\mu (\hat{0}, x)$.
\begin{definition}
\textit{A direct product} of posets P and Q is defined the following way:

$ P\times Q = \{(s,t):s\in P, t\in Q\}$, where $(s,t)\leq (\dot s,\dot t)\Leftrightarrow s\leq \dot s $ and $t \leq \dot t$
\end{definition}

\begin{assertion}
$\mu_{P \times Q}((s,t),(\dot s,\dot t)) =\mu_{P}(s,\dot s)\cdot \mu_{Q}(t,\dot t) $
\end{assertion}
\begin{definition}
Two posets are \textit{isomorphic}, if there is an order-preserving bijection between them whose inverse is also order-preserving.
\end{definition}
\begin{definition}
\textit{A characteristic polynomial} of a hyperplane arrangement A is defined by the following formula:
\[ \chi_{A} (t) = \sum_{x \in L(A)} \mu(x)t^{dim(x)}\].
\end{definition}

\begin{definition}
\textit{A matroid} is a pair $M = (S,J)$, where S is a finite set and
J is a collection of subsets of S, satisfying the following axioms:
\begin{enumerate}
    \item J is not empty, and if $K \in J, N \subset K$, then $N \in J$
    \item $\forall T \subset S $ the maximal elements of $J \cap 2^{T}$ have the same cardinality.
\end{enumerate}
\end{definition}
\begin{definition}
Two matroids $M = (S,J)$ and $\hat M = (\hat S, \hat J)$ are isomorphic, if there is a bijection $f:S \rightarrow \hat S$ , such that $\{x_1,x_2,...,x_k\}\in J \Leftrightarrow \{f(x_1),f(x_2),...f(x_k)\} \in \hat J$
\end{definition}
\begin{definition}
Let T be a subset of S. \textit{The rank} of T is defined by: $rk(T) = max(|I|: I \in J, I \subseteq S) $. The rank of a matroid M(S,J) is the rank of S.
\end{definition}
\begin{definition}
 \textit{A k-flat} of a matroid is a maximal subset of rank k.
\end{definition}
\begin{definition}
For a matroid M define L(M) to be the poset of flats of M, ordered by inclusion. 
The characteristic polynomial of a matroid m is defined by
\[ \chi_{M} (t) = \sum_{x \in L(M)} \mu(x)t^{r-rk(x)}\]

, where r is a rank of M.
\end{definition}

\begin{definition}
A matroid M is simple, if it does not contain elements x, such that $rk(\{x\}) =0$ (loops) and it does not contain elements x,y, such that $rk(\{x,y\}) =1 $ (parallel points).
\end{definition}

\begin{assertion}Let $M_1$ and $M_2$ be simple matroids. Matroids $M_1$ and $M_2$ are isomorphic, if and only if posets $L(M_1)$ and $L(M_2)$ are isomorphic.

\end{assertion}
\begin{definition}
For a hyperplane arrangement A the matroid $M_A(S,J)$ can be defined the following way:  S is a set of normal vectors to hyperplanes from A, with each hyperplane having exactly one normal vector in S, and elements of J are sets of linearly independent vectors from S. 
\end{definition}
\begin{assertion}
 If A is central, then $M_A(S,J)$ is simple, and $L(M) \cong L(A)$.
As a corollary, $\chi_{A} (t) = t^{n-r(A)}\cdot \chi_{M_A} (t)$, where r(A) is the rank of A, and n is the dimension of the linear space that contains A.
\end{assertion}
\begin{definition}
\textit{A proper coloring} of a graph G(V,E) on [n] is a map $C:V\rightarrow [n]$, such that if vertices $v_1$ and $v_2$ are adjacent, then $c(e_1) \neq c(e_2)$.
\end{definition}
\begin{definition}
\textit{The chromatic polynomial} $\chi_G$ is a polynomial with the following property: for each natural n $\chi_{G}(n)$ is equal to the number of proper colorings on [n]. 
\end{definition}
\begin{definition}
\textit{An orientation} of a graph is an assignment of a direction to each edge of that graph. An orientation is called acyclic, if it does not contain any directed cycles.
\end{definition}
\begin{definition}
  \textit{A graphical arrangement} of a graph G(V,E) with |V| = n in space $R^n$ is a hyperplane arrangement with a hyperplane $x_i - x_j = 0$ for every edge $(v_i,v_j)$ .
 \end{definition}
 \begin{assertion}
 A graphical arrangement has the following properties:
 \begin{enumerate}
     \item Its characteristic polynomial is equal to the chromatic polynomial of the initial graph. 
     \item Every acyclic orientation of a graph bijectively corresponds to a region of the graphical arrangement.
  \end{enumerate}
\end{assertion}

\section{Definition of an arrangement MA(G,N) and its properties}

\begin{definition}
Let G(V,E), |E|=n be a graph without loops and parallel edges, let $N:E\rightarrow \{1,2,..,n\}$ be a numeration of edges in G. Let P be a sequence of edges $(r_1,r_2,r_3,...,r_n)$ that form a simple path or a simple cycle with even number of edges in graph G. A hyperplane that corresponds to sequence P is a hyperplane with an equation $x_1-x_2+x_3 -...x_n=0$. Let F be a set of all sequences of edges in G that form a simple path or a simple cycle with even number of edges in G. Then matching arrangement is a set of all hyperplanes that correspond to elements of F. A matching arrangement will be denoted as MA(G,N).           
\end{definition}
\begin{definition}
 Every hyperplane arrangement corresponds to the set of all equations of its hyperplanes. If there are two hyperplane arrangements with identical sets of equations, then such arrangements will be call \textit{identical}.
\end{definition}

\begin{theorem}
Let $G_1$, $G_2$ be two connected graphs without loops, parallel edges and isolated vertices, and there are a numeration of edges $N_1$ in $G_1$ and a numeration of edges $N_2$ in $G_2$, such that $MA(G_1,N_1)$ and $MA(G_2,N_2)$ are identical. Then $G_1$ and $G_2$ are isomorphic, except the case when one of the graphs is isomorphic to $K_3$, and another is isomorphic to $K_{1,3}$.
\end{theorem}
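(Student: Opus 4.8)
The plan is to reconstruct the \emph{line graph} of each graph from the simplest hyperplanes of its matching arrangement, and then to invoke Whitney's classical line-graph isomorphism theorem, whose unique exceptional pair is exactly $K_3$ and $K_{1,3}$. As a first bookkeeping step I would count nonzero coefficients: since distinct edges are assigned distinct coordinates and every coefficient of an equation $x_{i_1}-x_{i_2}+x_{i_3}-\cdots=0$ is $\pm 1$, a sequence $P$ of $k$ edges yields an equation with exactly $k$ nonzero coefficients. Hence the hyperplanes with a single nonzero coefficient are precisely the coordinate hyperplanes $x_j=0$ coming from the one-edge paths, and there are exactly $|E(G)|$ of them. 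Comparing these single-coefficient hyperplanes across the two identical arrangements already forces $|E(G_1)|=|E(G_2)|=n$, so both arrangements live in the same $R^n$.

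The central step is the following lemma: a hyperplane with exactly two nonzero coefficients lies in $MA(G,N)$ if and only if it has the form $x_i-x_j=0$ where the edges numbered $i$ and $j$ share a vertex. Indeed, a two-coefficient equation must originate from a two-edge sequence $P=(r_1,r_2)$; in a graph without parallel edges a two-edge simple cycle cannot occur, and an even simple cycle has at least four edges, so $P$ is forced to be a simple path of length two, i.e.\ a pair of adjacent edges, contributing the difference $x_{N(r_1)}-x_{N(r_2)}=0$. Conversely each adjacent pair of edges is a length-two path and contributes such a hyperplane. Therefore the collection of two-coefficient hyperplanes of $MA(G,N)$ is exactly the edge set of the line graph of $G$, drawn on the vertex set $[n]$ through the numeration $N$.

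Since $MA(G_1,N_1)$ and $MA(G_2,N_2)$ are identical, their two-coefficient hyperplanes coincide, so the line graphs of $G_1$ and $G_2$ are the \emph{same} graph on $[n]$ and in particular isomorphic. Both $G_1$ and $G_2$ are connected and have no isolated vertices (the latter hypothesis is what guarantees a graph is determined by its edges, hence visible to the arrangement at all), so I would now apply Whitney's theorem: connected graphs with isomorphic line graphs are isomorphic, with the sole exception of $K_3$ and $K_{1,3}$, whose line graphs both equal $K_3$. This gives $G_1\cong G_2$ outside the stated exceptional case. I expect the main obstacle to be the airtight justification of the central lemma --- confirming that no even cycle or longer path can degenerate into a two-variable equation, and that the adjacency recovered this way is genuinely the line-graph adjacency --- after which Whitney's theorem produces the exception for free. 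I would also remark, to show the exception is unavoidable, that $K_3$ and $K_{1,3}$ do share an arrangement: each contributes precisely three coordinate hyperplanes together with the three pairwise differences among those coordinates, and nothing else, since $K_3$ is an odd cycle (excluded) and $K_{1,3}$ admits no path longer than two.
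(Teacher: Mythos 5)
Your proposal is correct and follows essentially the same route as the paper: recover the line graph of each graph from the hyperplanes $x_i-x_j=0$ arising from two-edge paths, apply Whitney's theorem, and check directly that $K_3$ and $K_{1,3}$ produce identical arrangements. Your extra bookkeeping (counting nonzero coefficients to isolate the length-one and length-two contributions and to rule out interference from longer paths or even cycles) only makes explicit what the paper's proof leaves implicit.
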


\begin{proof}[Proof]
 An arrangement MA(G,N) contains all hyperplanes that correspond to sequences of length 2. An equation of such hyperplane is $x_i - x_j = 0$, and if there is a hyperplane with this equation in MA(G,N),then $e_i$ and $e_j$ are adjacent in G (and visa versa, if there is no corresponding hyperplane in MA(G,N) for $e_i$ and $e_j$, then these edges are not adjacent). Therefore, an edge graph of an initial graph G can be reconstructed from MA(G,N) the following way. The number of vertices of the edge graph can be deduced from the dimension of a vector space that contains MA(G,N). Let $\{v_1,..,v_n\}$ be a set of vertices of the edge graph of G. Then $v_i$ and $v_j$ are connected if and only if a hyperplane $x_i - x_j = 0$ is an element of MA(G,N).

 Therefore, if $MA(G_1,N_1)$ and $MA(G_2,N_2)$ are identical, then edge graphs of $G_1$ and $G_2$ are isomorphic. According to Whitney theorem\cite{witney}, if two graphs have edge graphs that  are isomorphic, then initial graphs are isomorphic as well. The only exception is when one graph is isomorphic to $K_{3}$, and another is isomorphic to $K_{1,3}$. 
 
 If $G_1\cong K_{3}$ and $G_2 \cong K_{1,3}$, then the sets of hyperplane equations for both $MA(G_1,N_1)$ and $MA(G_2,N_2)$ consist of equations $x_1 = 0$, $x_2 = 0$, $x_3 = 0$, $x_1 - x_2 = 0$, $x_2 - x_3 = 0$, $x_3 - x_1 = 0$, therefore $MA(G_1,N_1)$ and $MA(G_2,N_2)$ are identical.
\end{proof}

\begin{remark}
The result on theorem 1 can be expanded to cover the case of non-connected graphs the following way. Any two edges  $e_i$ and $e_j$ of a graph G are in the same connected component, if and only if there is a  hyperplane in MA(G,N), whose equation contains both $x_i$ and $x_j$. Let $G_1$ and $G_2$ be two non-connected graphs with identical arrangements $MA(G_1,N_1)$ and $MA(G_2,N_2)$. Then equations of hyperplanes of $MA(G_1,N_1)$ and $MA(G_2,N_2)$ can be divided into groups that correspond to connected components of the initial graphs, and then theorem 1 can be applied two each group independently. Let $G_1$ contain p connected components that are isomorphic to $K_3$, and q components that are isomorphic to $K_{1,3}$. Let $G_2$ contain s and r components of the same types respectively. Let $p+q=s+r$, $p\neq s$, $q\neq r$, and for every other component A of $G_1$ there is one and only one component B in $G_2$, such that A isomorphic to B. Then there are numerations $N_1$ and $N_2$, such that $MA(G_1,N_1)$ and $MA(G_2,N_2)$ are identical, but $G_1$ is not isomorphic to $G_2$. However, if graphs $G_1$ and $G_2$ don't contain components that are isomorphic to $K_3$ or  $K_{1,3}$, and $MA(G_1,N_1)$ and $MA(G_2,N_2)$ are identical,then $G_1$ and $G_2$ are isomorphic.

\end{remark}

\begin{theorem}
Let G(V,E), |E|=n be a graph without loops and parallel edges. Let D be a region of an arrangement $MA(G,N)$, and let $a = (a_1,..,a_n)$ and $b=(b_1,...b_n)$ be two vectors from D. Let $\Pi_1$ be a matching with the greatest sum of weights of its edges if a weight of an edge $e_i$ is equal to $a_i$. Let $\Pi_2$ be a matching with the greatest sum of weights of its edges if a weight of an edge $e_i$ is equal to $b_i$.  Then $\Pi_1 = \Pi_2$.

\end{theorem}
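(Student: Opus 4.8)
The plan is to exploit the classical structure of the symmetric difference of two matchings together with the fact that no hyperplane of $MA(G,N)$ separates $a$ from $b$ inside the region $D$. First I would recall that for any two matchings $M$ and $M'$ the symmetric difference $M \triangle M'$ is a vertex-disjoint union of simple paths and simple cycles of even length, along each of which the edges alternate between $M$ and $M'$; this holds because every vertex has degree at most two in $M \cup M'$. Consequently each connected component $C$ of $M \triangle M'$ is a simple path or an even simple cycle of $G$, so $C$ is one of the sequences in $F$ and the corresponding linear form $L_C$ (the alternating sum of the coordinates indexed by the edges of $C$) defines a hyperplane of $MA(G,N)$. I would note that the degenerate component consisting of a single edge $e_i$ is included as well, its form being $x_i$ and its hyperplane $x_i = 0$, exactly the length-one paths already present in the arrangement.

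Next I would record the local exchange step. Fix a maximum-weight matching $\Pi_1$ for the weights $a$, take an arbitrary matching $M'$, and let $C_1,\dots,C_r$ be the components of $\Pi_1 \triangle M'$. For each $j$ the set $\Pi_1 \triangle C_j$ (flipping $\Pi_1$ along the single alternating component $C_j$) is again a matching, and the induced weight change for a weight vector $w$ equals $-\delta_j(w)$, where $\delta_j(w) := w(\Pi_1 \cap C_j) - w(M' \cap C_j)$ is precisely $\varepsilon_j L_{C_j}(w)$ with a sign $\varepsilon_j \in \{+1,-1\}$ determined only by whether the first edge of $C_j$ lies in $\Pi_1$ or in $M'$. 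Since $\Pi_1$ is of maximum weight for $a$, no single flip can increase the weight, so $\delta_j(a) = \varepsilon_j L_{C_j}(a) \ge 0$ for every $j$. Because $a$ lies in the open region $D$, it lies off every hyperplane, hence $L_{C_j}(a) \neq 0$ and in fact $\varepsilon_j L_{C_j}(a) > 0$.

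The conclusion then follows from the defining property of a region. As $a$ and $b$ lie in the same connected component $D$ of the complement of the arrangement, the segment between them meets no hyperplane, so $L_{C_j}(a)$ and $L_{C_j}(b)$ share the same sign for every $j$; therefore $\varepsilon_j L_{C_j}(b) > 0$ as well. Summing over components gives, for every matching $M' \neq \Pi_1$,
\[
b(\Pi_1) - b(M') = \sum_{j=1}^{r} \varepsilon_j L_{C_j}(b) > 0,
\]
so $\Pi_1$ is the unique maximum-weight matching for the weights $b$, i.e. $\Pi_2 = \Pi_1$. The same computation with $b$ replaced by $a$ shows the maximum-weight matching is in fact unique on $D$, which removes any ambiguity in the choice of $\Pi_1$ and $\Pi_2$.

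I expect the main obstacle to be the sign bookkeeping in the exchange step: one must check that each single-component flip genuinely yields a matching, so that optimality of $\Pi_1$ applies to it, and that the alternating weight difference matches the alternating form $L_{C_j}$ up to a sign that depends only on the combinatorics of $C_j$ and not on the particular weight vector. Once this is pinned down, invariance of the sign of each $L_{C_j}$ across the region $D$ does all the remaining work.
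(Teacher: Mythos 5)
Your proof is correct and takes essentially the same route as the paper's: decompose the symmetric difference of two matchings into alternating simple paths and even cycles, note that each component's alternating linear form is a hyperplane of $MA(G,N)$ whose sign is forced by optimality and is constant on the region $D$, and conclude by the exchange argument. The only difference is presentational --- you argue directly against an arbitrary competitor matching $M'$ (and so also get uniqueness of the optimal matching on $D$), while the paper argues by contradiction using $\Pi_1 \triangle \Pi_2$; the underlying decomposition and key observation are identical.
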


\begin{proof}[Proof]
By contradiction.Assume that for two vectors a,b from the same region D of MA(G,N) the corresponding maximum matchings are different. Let $H = \Pi_1 \triangle \Pi_2$ be a symmetric difference of $\Pi_1$ and $\Pi_2$.  If there is a vertex with degree more then 2 in H, then there are two adjacent edges from the same matching, which contradicts the definition of a matching. Therefore, all vertices in H have degree less or equal to 2, and each of the connected components of H is either a simple path or a simple cycle. Edges from the same matching can't be adjacent, so edges belong  to $\Pi_1$ and to $\Pi_2$ in rotation in each component of H. Therefore, a component of H can't be a simple cycle with odd number of edges. Let C be one of the components of H and let $x_{i_1} - x_{i_2} + ... x_{i_m}$ be the corresponding expression. Assume that values of this expression on a and b have the same sign. Without loss of generality, let's assume, that these values are negative. Without loss of generality, let $\Pi_1$ be the matching that contains edges $x_{i_1},x_{i_3},...$, and let a be a vector of weights, for which $\Pi_1$ is supposed to be the maximum matching. Then the matching $\Pi_3$, obtained  by replacing edges $x_{i_1},x_{i_3},...$ with edges $x_{i_2},x_{i_4},...$  in $\Pi_1$,has greater total weight, then $\Pi_1$, with a as a vector of weights. This contradicts the initial assumption that $\Pi_1$ is a maximum matching.
Therefore, the value of the expression $x_{i_1} - x_{i_2} + ... x_{i_m}$ is positive in one of the vectors a and b, an negative in another. This means that a hyperplane with the equation  $x_{i_1} - x_{i_2} + ... x_{i_m} = 0$ divides a and b, therefore a and b belong to different regions of the arrangement. This contradicts the condition that a and b both belong to D.
\end{proof}

\section{Characteristic polynomial of the arrangement MA(G,N)}
\begin{theorem}(\textit{On characteristic polynomial as a graph invariant}): Let $MA(G,N_1)$ and $MA(G,N_2)$ be two hyperplane arrangements constructed from two different numerations of a graph G. Then $\chi_{MA(G,N_1)}(t) = \chi_{MA(G,N_2)}(t)$.
\end{theorem}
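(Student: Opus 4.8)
The starting point is the observation that a numeration is nothing more than a labelling of the coordinate axes: the set $F$ of edge–sequences forming simple paths or even simple cycles is intrinsic to the graph $G$ and does not depend on which numeration we choose. For a fixed path or even cycle $P$, the associated hyperplane always carries the same alternating sign pattern $+,-,+,\dots$, where the sign of an edge is determined solely by its position in the sequence $P$, not by the number assigned to it. Only the \emph{index} of each variable changes when we pass from $N_1$ to $N_2$. So I would first argue that $MA(G,N_2)$ is obtained from $MA(G,N_1)$ by a permutation of the coordinates of $\mathbb{R}^n$.

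Concretely, I would introduce the permutation $\sigma = N_2 \circ N_1^{-1}$ of $\{1,\dots,n\}$ and the induced linear map $\phi:\mathbb{R}^n\to\mathbb{R}^n$ that permutes coordinates according to $\sigma$. The main verification is that $\phi$ maps each hyperplane of $MA(G,N_1)$ onto the hyperplane of $MA(G,N_2)$ coming from the \emph{same} sequence $P$: the normal vector of the $N_1$-hyperplane has its nonzero $\pm 1$ entries in the positions $N_1(r_k)$, and applying $\sigma$ moves them to the positions $N_2(r_k)$, which is exactly the normal vector prescribed by $P$ under $N_2$. Here I would also note that reading $P$ in the opposite direction only negates the whole equation and hence yields the same hyperplane, so the correspondence $P\mapsto$ hyperplane is well defined and identical in both numerations. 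Since $\phi$ is invertible, it is a bijection between the two sets of hyperplanes, and therefore $\phi\bigl(MA(G,N_1)\bigr)=MA(G,N_2)$.

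Once this is established, the conclusion is formal. A linear automorphism $\phi$ carries intersections of hyperplanes to intersections of their images and preserves inclusions, so it induces an isomorphism of the intersection posets $L\bigl(MA(G,N_1)\bigr)\cong L\bigl(MA(G,N_2)\bigr)$; moreover, being a vector-space isomorphism, it preserves the dimension of every flat. The M\"obius function depends only on the order structure of the poset, so corresponding flats have equal M\"obius values $\mu(x)$, and corresponding flats have equal $\dim(x)$. Comparing the two sums in the definition
\[
\chi_{MA(G,N_i)}(t)=\sum_{x\in L(MA(G,N_i))}\mu(x)\,t^{\dim(x)}
\]
term by term along the isomorphism then gives $\chi_{MA(G,N_1)}(t)=\chi_{MA(G,N_2)}(t)$.

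\textbf{Where the work lies.} There is no deep obstacle; the entire content is the bookkeeping in the second paragraph. The one point deserving care is the sign pattern: one must confirm that the alternating coefficients are attached to \emph{positions within the sequence} $P$ rather than to edge numbers, so that permuting coordinates really does send one hyperplane exactly onto its counterpart, and that the two possible traversal directions of $P$ do not produce a spurious second hyperplane. After that, the equality of the characteristic polynomials is an immediate consequence of the fact that $\chi$ is built only from poset data (via $\mu$) and from flat dimensions, both of which are invariant under a coordinate-permuting linear isomorphism.
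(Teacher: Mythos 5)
Your proposal is correct and follows essentially the same route as the paper: the paper's central construction is precisely your coordinate-permuting isomorphism (it defines $U$ with $U(e_i)=e^j$ whenever $N_1(x)=i$ and $N_2(x)=j$ for an edge $x$), and it likewise uses this map to identify the two intersection posets and compare the defining sums for $\chi$ term by term. The only cosmetic difference is that the paper routes the argument through the associated matroids (choosing normal vectors compatibly and invoking that the arrangements are central, so $L(M_A)\cong L(A)$), whereas you let the permutation act on the arrangement directly — a slightly more streamlined packaging of the same idea.
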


\begin{proof}[Proof]
For the arrangement $MA(G,N_1)$ elements of its matroid will be chosen the following way. If a hyperplane's equation is $x_i = 0$, then $e_i$ will be chosen as a normal vector to this hyperplane, where $\{e_1,e_2,...,e_n\}$ is a standard basis. For every other hyperplane with an equation $x_{i_1} - x_{i_2} + x_{i_3} - x_{i_4}... = 0$ one of two vectors that can be written as $e_{i_1} - e_{i_2} + e_{i_3} - e_{i_4} +...$ will be chosen arbitrarily. For the arrangement $MA(G,N_2)$  elements of its matroid will be chosen the following way. If a hyperplane's equation is $x^i = 0$, then $e^i$ will be chosen as a normal vector to this hyperplane, where $\{e^1,e^2,...,e^n\}$ -is a standard basis.  Otherwise let a hyperplane H in $MA(G,N_2)$ with an equation $x^{i_1} - x^{i_2} + x^{i_3} - x^{i_4}... = 0$ correspond to a simple path $(r_1,r_2,r_3,...r_k), r_i \in E$ in G, and let $H'$ in $MA(G,N_1)$ correspond to the same path.If vector $e_{N_1(r_{i_1})} - e_{N_1(r_{i_2})} + e_{N_1(r_{i_3})} - e_{N_1(r_{i_4})} +...$ was chosen as a normal vector to H', then vector $e^{N_2(r_{i_1})} - e^{N_2(r_{i_2})} + e^{N_2(r_{i_3})} - e^{N_2(r_{i_4})} +...$ will be chosen as a normal vector to H.

 Let  $M_1(S_1,J_1)$ be a matroid of the arrangement $MA(G,N_1)$ and $M_2(S_2,J_2)$ be a matroid of the arrangement $MA(G,N_2)$. $MA(G,N_1)$ and $MA(G,N_2)$ are central, therefore posets of $MA(G,N_1)$ and $MA(G,N_2)$  are isomorphic to posets of $M_1$ and $M_2$ respectively.

 Let $U:R^n\rightarrow R^n$ be a linear operator, such that $U(e_i) = e^j$ if and only if $N_1(x) =i$, $N_2(x) = j$ for some edge x in G. Each element $y\in S_1$ can be expressed as $y = e_{i_1} - e_{i_2}+...e_{i_k}$, so $U(y) = e^{j_1} -e^{j_2} +...e^{j_k}$. Both y and U(y) correspond to the same edge sequence in G, therefore U defines a bijection between $S_1$ and $S_2$. Since U is an invertible linear operator, this bijection between $S_1$ and $S_2$ is and 
 isomorphism between $M_1$ and $M_2$.

Since matroids $M_1$ and $M_2$ are isomorphic, posets $L(M_1)$ and $L(M_2)$ are isomorphic. Since these posets are isomorphic, posets of the initial arrangements are also isomorphic.
 The isomorphism of posets $L(MA(G,N_1))$ and $L(MA(G,N_2))$ will be denoted as V.
 According to definition, $\chi_{MA(G,N_1)} (t) = \sum_{x \in L(MA(G,N_1))} \mu_{L(MA(G,N_1))}(x)t^{dim(x)}$. Since $L(MA(G,N_1))$ and $L(MA(G,N_2))$ are isomorphic, $\chi_{MA(G,N_2)} (t) = \sum_{x \in L(MA(G,N_1))} \mu_{L(MA(G,N_2))}(V(x))t^{dim(V(x))}$.  $\mu(x) = \mu(V(x))$, because V is an isomorphism.   $dim(x)= dim(V(x))$, because $L(MA(G,N_1))$ and $L(MA(G,N_2))$ are isomorphic and both arrangements belong to linear spaces of the same dimension. Therefore, every member of the sum in $\chi_{MA(G,N_1)} (t)$ is equal to its corresponding member of the sum in $\chi_{MA(G,N_2)} (t)$, so $\chi_{MA(G,N_1)}(t) =\chi_{MA(G,N_2)}(t)$.

\end{proof}

\begin{theorem}(\textit{On a characteristic polynomial for a non-connected graph}): Let $G(V,E)$ have the following property: there are set $V_1,V_2$, such that $V  = V_1 \cup V_2$,  $V_1 \cap V_2 = \emptyset$ , and there is no edge $e = (v_1,v_2)$ that connects a vertex from $V_1$ and a vertex from $V_2$. Let $G_1(V_1, E_1),G_2(V_2,E_2)$ be subgraphs of G, $E = E_1 \cup E_2$. Then $\chi_{MA(G,N)}(t) = \chi_{MA(G_1,N_1)}(t) \cdot \chi_{MA(G_2,N_2)}(t)$.
\end{theorem}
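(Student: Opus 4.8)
The plan is to recognize $MA(G,N)$ as a product arrangement of $MA(G_1,N_1)$ and $MA(G_2,N_2)$ and then reduce the factorization of the characteristic polynomial to the assertion on the M\"{o}bius function of a direct product of posets.

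First I would establish the product structure. Since no edge joins $V_1$ and $V_2$, every simple path and every even simple cycle of $G$ lies entirely in $G_1$ or entirely in $G_2$, and so can never use edges from both $E_1$ and $E_2$. Hence each defining equation of $MA(G,N)$ involves only coordinates indexed by $E_1$ or only coordinates indexed by $E_2$. Writing $R^n = R^{n_1}\times R^{n_2}$ according to $E = E_1\cup E_2$ (with $n_1=|E_1|$, $n_2=|E_2|$), each hyperplane of $MA(G,N)$ is either $H\times R^{n_2}$ for a hyperplane $H$ of $MA(G_1,N_1)$ or $R^{n_1}\times K$ for a hyperplane $K$ of $MA(G_2,N_2)$. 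Thus $MA(G,N)$ is exactly the product of the two arrangements placed on complementary coordinate blocks.

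Next I would identify the intersection poset. Grouping the hyperplanes cutting out any nonempty intersection $z\in L(MA(G,N))$ by component, the $E_1$-hyperplanes meet in $x\times R^{n_2}$ for some $x\in L(MA(G_1,N_1))$ and the $E_2$-hyperplanes in $R^{n_1}\times y$ for some $y\in L(MA(G_2,N_2))$, so $z = x\times y$; both factors contain the origin because the arrangements are central, so $z$ is automatically nonempty. The map $(x,y)\mapsto x\times y$ is therefore a bijection, and since each order is reverse inclusion, $x_1\times y_1\supseteq x_2\times y_2$ holds iff $x_1\supseteq x_2$ and $y_1\supseteq y_2$; hence it is an isomorphism $L(MA(G,N))\cong L(MA(G_1,N_1))\times L(MA(G_2,N_2))$. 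I would also record the dimension identity $\dim(x\times y)=\dim(x)+\dim(y)$ and note that the minimal element $\hat 0 = R^n$ corresponds to the pair $(\hat 0,\hat 0)$.

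Finally I would compute. By the assertion on the M\"{o}bius function of a direct product, $\mu(x\times y)=\mu(\hat 0,x)\cdot\mu(\hat 0,y)=\mu(x)\mu(y)$. Substituting into the defining sum and using the dimension identity,
\[
\chi_{MA(G,N)}(t)=\sum_{x,y}\mu(x)\mu(y)\,t^{\dim x+\dim y}=\Big(\sum_x\mu(x)t^{\dim x}\Big)\Big(\sum_y\mu(y)t^{\dim y}\Big),
\]
which is precisely $\chi_{MA(G_1,N_1)}(t)\cdot\chi_{MA(G_2,N_2)}(t)$. I expect the main obstacle to be the middle step rather than this last algebra: one must check carefully that every element of the intersection poset genuinely factors as $x\times y$ with additive dimensions, i.e. that the product decomposition holds for the whole poset and not merely for the generating hyperplanes. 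Once the poset isomorphism and dimension additivity are secured, the factorization follows mechanically from the product formula for the M\"{o}bius function.
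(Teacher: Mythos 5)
Your proof is correct, but it takes a genuinely different route from the paper. You decompose the intersection lattice of the arrangement directly: since no path or even cycle can cross between $V_1$ and $V_2$, each hyperplane of $MA(G,N)$ lives in one coordinate block, so $MA(G,N)$ is the product arrangement and $L(MA(G,N))\cong L(MA(G_1,N_1))\times L(MA(G_2,N_2))$ via $(x,y)\mapsto x\times y$, with additive dimensions; the factorization then follows from the product formula for the M\"{o}bius function (Assertion 1). The paper instead routes everything through the associated matroids: it fixes compatible choices of normal vectors, proves by a flat-by-flat argument that the poset of flats of $M$ is the direct product of those of $M_1$ and $M_2$, factors the matroid characteristic polynomial, and then converts back to the arrangement polynomial using $\chi_A(t)=t^{\,n-r(A)}\chi_{M_A}(t)$ together with the observation that the ranks here equal the ambient dimensions. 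Your version is shorter, avoids the arbitrary sign choices of normal vectors and the matroid-to-arrangement conversion step, and the one point you flag as delicate --- that every element of the intersection poset genuinely factors with additive dimension --- is handled correctly by your centrality remark (every intersection contains the origin, so nothing is empty and the map $(x,y)\mapsto x\times y$ is a bijection). The paper's matroid detour buys only the reuse of machinery it has already set up for the invariance theorem; for this statement the direct lattice argument is the cleaner of the two.
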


\begin{proof}[Proof]
Let $|E_1| = k,|E_2| = m, |E| = k+m$. The arrangements MA(G,N),$MA(G_1,N_1)$, $MA(G_2,N_2)$ belong to a linear spaces $R^{k+m}$, $R^k$,  $R^m$ respectively. Let $e_1,e_2,...,e_k$ be a basis of $R^k$,  let $e^1,e^2,....,e^m$ be a basis $R^m$. Each vector $e_i$ corresponds to an edge in $G_1$, and each vector $e^i$ corresponds to an edge in $G_2$. Each vector of the basis of $R^{k+m}$ corresponds to an edge in G, which means it corresponds either to a edge in $G_1$ or to an edge in $G_2$. In the first case this vector will be denoted as $f(e_i)$, where $e_i$ is a vector that corresponds to the same edge, and in the second case it will be denoted as $g(e^i)$, where $e^i$ is a vector that corresponds to the same edge. Therefore, the basis in $R^{n+m}$ will be denoted as $\{f(e_1),f(e_2),...,f(e_k),g(e^1),g(e^2),...,g(e^m)\}$.

Let  $M(S,J),M_1(S_1,J_1),M_2(S_2,J_2)$ be matroids of the arrangements $MA(G,N),MA(G_1,N_1),MA(G_2,N_2)$. Elements of these matroids are chosen the following way. For an arrangement with an equation $x_i = 0$ from $MA(G_1,N_1)$ vector $e_i$ is chosen as a normal vector. For every other hyperplane $MA(G_1,N_1)$ one of two vectors that can be written as  $e_{i_1} - e_{i_2} +...$ is chosen arbitrarily.This is how all elements of $S_1$ are chosen.  Elements of $S_2$ are chosen in a similar way. Let H be an arrangement in $MA(G,N)$ that corresponds to a simple path $(r_1,..,r_k), r_i \in E$ in G, such that all $r_i$ belong to $G_1$, and let H' be a hyperplane from $MA(G_1,N_1)$ that corresponds to the same path. Then if vector $e_{N_{1}(r_1)} - e_{N_{1}(r_2)} + e_{N_{1}(r_3)} ....$ was chosen for H', then vector $f(e_{N_{1}(r_1)}) - f(e_{N_{1}(r_2)}) + f(e_{N_{1}(r_3)}) ....$ is chosen for H.
 
 The partially ordered sets of arrangements  $MA(G,N),MA(G_1,N_1),MA(G_2,N_2)$ are isomorphic to the partially ordered sets of matroids $M(S,J),M_1(S_1,J_1),M_2(S_2,J_2)$ respectively.
Vectors $e_1,e_2,...,e_k$ belong to $S_1$, vectors $e^1,e^2,....,e^m$ belong to $S_2$, vectors $\{f(e_1),f(e_2),...,f(e_k),g(e^1),g(e^2),...,g(e^m)\}$ belong to S, which means that $rk(M_1) = k,rk(M_2) = m, rk(M) = k+m$. Each element of $S_1$ is normal to a hyperplane from $MA(G_1,N_1)$, it corresponds to a simple path in $G_1$ and is equal to $e_{i_1} - e_{i_2} + e_{i_3} -...$ for a certain set of vectors $e_{i_1},e_{i_2},e_{i_3},...$. In the same way each element of $S_2$ is normal to a hyperplane from $MA(G_2,N_2)$, it corresponds to a simple path in $G_2$ and is equal to $e^{i_1} - e^{i_2} + e^{i_3} -...$ for a certain set of vectors $e^{i_1},e^{i_2},e^{i_3}...$.Each element of S is normal to a hyperplane from $MA(G,N)$, it corresponds to a simple path $G$, which means it corresponds to a simple path in either $G_1$ or $G_2$. Then it is equal to either $f(e_{i_1}) - f(e_{i_2}) + f(e_{i_3}) -...$ or $g(e^{i_1}) - g(e^{i_2}) + g(e^{i_3}) -...$ and it also corresponds to an element of either $S_1$ or $S_2$.The last two correspondences will now be defined more accurately.  Let x = $\alpha_{1}e_1 + \alpha_{2}e_2 +...+ \alpha_{k}e_k$ be an element of $S_1$. Then a corresponding element in $R^{k+m}$ is $\alpha_{1}f(e_1) + \alpha_{2}(e_2) +...+ \alpha_{k}f(e_k)$. Correspondence for elements of $S_2$ is defined in a similar way.

The next step is to prove that the partially ordered set P of the matroid M is isomorphic to a direct product of partially ordered sets $P_1,P_2$ of matroids $M_1$ and $M_2$. It will be proven by construction of an isomorphism from $P_1 \times P_2$ to P. Let $B = \{x_1,x_2,...,x_p\}$ be an n-facet in $M_1$, and let $C = \{x^1,x^2,...,x^q\}$ be an l-facet in $M_2$. Then an element $(B,C)\in P_1 \times P_2$ corresponds to a set $O =\{f(x_1),f(x_2),...,f(x_p),g(x^1),g(x^2),...,g(x^q)\}$. Without loss of generality, let $\{x_1,x_2,...,x_n\}$ be a set of n linearly independent vectors from B, and let $\{x^1,x^2,...,x^l\}$ be a set of l independent vectors from C.  Let $\sum_{i} \alpha_i \cdot f(x_i) + \sum_{j}\beta_j \cdot g(x^j)$ be a non-trivial linear combination of vectors $\{f(x_1),f(x_2),...,f(x_n)\}$ and $\{g(x^1),g(x^2),...,g(x^l)\}$.  $\\sum_{i} \alpha_i \cdot f(x_i)$ can be written as a linear combination of $f(e_i)$, and $\sum_{j}\beta_j \cdot g(x^j)$ can be written as a linear combination of $g(e^j)$. Since $\{f(e_1),f(e_2),...,f(e_k),g(e^1),g(e^2),...,g(e^m)\}$ is a basis of space $R^{k+m}$ the sum of these two combinations is equal to 0, if and only if each of these two combinations is equal to 0. And since $\{f(x_1),f(x_2),...,f(x_n)\}$  and $\{g(x^1),g(x^2),...,g(x^l)\}$ are two sets of linear independent vectors, then each combination is equal to 0, if and only if all coefficients are equal to 0. Therefore, the union of sets  $\{f(x_1),f(x_2),...,f(x_n)\}$ and $\{g(x^1),g(x^2),...,g(x^l)\}$ is a set of n+l linearly independent vectors. Lastly, every element of O can be written as a linear combination of these vectors, so the rank of the set O is equal to n+l.

Now suppose that S contains an element x that can be written as a linear combination of these (n+l) vectors, but it does not belong to O. Element x can be a linear combination of either only vectors $f(e_i)$, or only vectors $g(e^j)$,so it can be a linear combination of either only $\{f(x_1),f(x_2),...,f(x_n)\}$, or only $\{g(x^1),g(x^2),...,g(x^l)\}$. Therefore, there is an element in either $S_1$ or $S_2$, that can be written as a linear combination of $\{x_1,x_2,...,x_n\}$ (or $\{x^1,x^2,...,x^l\}$, respectively) but does not belong to B (or C, respectively). This means that either B is not a facet of $M_1$,or C is not a facet of$M_2$, which is a contradiction. Therefore, O is a (n+l)-facet of M.

The constructed function will be denoted as F. For example, $O = F((B,C))$. F matches elements of $P_1 \times P_2$ with elements of $P$.  F is injective, because  f and g are injective, and because elements of any set from P can be divided in a unique way into elements like $f(x_i)$ and elements like $g(x^j)$. The next step is to prove, that F is surjective. Let O be an element of P,which means that O is an n-facet in  and a set of vectors $\{f(x_1),f(x_2),...,f(x_p),g(x^1),g(x^2),...,g(x^q)\}$. Let B be $\{x_1,x_2,...,x_p\}$, and let C be $\{x^1,x^2,...,x^q\}$. If B is not a facet in $M_1$, then there is an element x in $S_1$, that can be written as a linear combination of $x_1,x_2,...,x_p$, but does not belong to B. Therefore, the element f(x) can be written as a linear combination of $\{f(x_1),f(x_2),...,f(x_p)\}$, but does not belong to O. This means that O is not a facet of M, which is contradiction. In turn, this means that B is a facet of $M_1$. In the same way C is a facet of $M_2$. By definition of F, $F((B,C)) = O)$. This concludes the proof that F is surjective. 

As a result,  F is a bijection between $P_1\times P_2$ and P. $(B_1,C_1)\leq (B_2,C_2) $, if and only if each element of $B_1$ is an element of $B_2$,and each element of $C_1$ is an element of $C_2$. On the other hand,  $O_1 = F((B_1,C_1))\leq F((B_2,C_2))= O_2 $ ,if and only if each element of $O_1$ is an element of $O_2$. Each element of $O_2$ can be written either  as $f(x),x\in B_2$, or as $g(y),y\in C_2$,and each element of $O_1$ can be written either as $f(x),x\in B_1$, or as $g(y),y\in C_1$
All elements of type $f(x),x\in B_1$ of the set $O_1$ belong to $O_2$, if and only if all elements of $B_1$ belong to $B_2$, and all elements of type $g(y),y\in C_1$  of the set $O_1$ belong $O_2$, if and only if all elements of $C_1$ belong to $C_2$. Therefore,  $(B_1,C_1)\leq (B_2,C_2) $ is equivalent to $F((B_1,C_1))\leq F((B_2,C_2))$, which means that F preserves order in partially ordered sets. As a result, F is an isomorphism of partially ordered sets.

The characteristic polynomial of M can now be transformed the following way.
\[\chi_{M} (t) = \sum_{x \in L(M)} \mu(x)t^{rk(M)-rk(x)} = \sum_{x=F((B\times C))} \mu(x)t^{rk(M)-rk(x)}\]\[ = \sum_{x=F((B\times C))} \mu(B) \cdot \mu(C) \cdot t^{rk(M_1)+rk(M_2)-rk(B) - rk(C)}\]\[ = \sum_{x=F((B\times C))} \mu(B)\cdot t^{rk(M_1)-rk(B)} \cdot \mu(C)\cdot t^{rk(M_2)- rk(C)}\]\[ =  \sum_{B \in L(M_1)}\mu(B)t^{rk(M_1)-rk(B)} \cdot  \sum_{C \in L(M_2)}\mu(C)t^{rk(M_2)-rk(C)} = \chi_{M_1} (t) \cdot \chi_{M_2} (t)\] 
Since the dimensions of linear spaces that contain normal vectors to arrangements of hyperplanes $MA(G,N),MA(G_1,N_1),MA(G_2,N_2)$ are equal to the dimensions of spaces that contain $MA(G,N),MA(G_1,N_1),MA(G_2,N_2)$, then $\chi_{MA(G,N)} (t) = \chi_{M} (t),\chi_{MA(G_1,N_1)} (t) = \chi_{M_1} (t),\chi_{MA(G_2,N_2)} (t) = \chi_{M_2} (t) $,  therefore $\chi_{MA(G,N)}(t) = \chi_{MA(G_1,N_1)}(t)\cdot\chi_{MA(G_2,N_2)}(t)$. 
\end{proof}
 \begin{corollary}
 Let G be a graph without loops, parallel edges and isolated vertices, and let $G_1, G_2,...,G_k$ be the connected components of G. 

Then $ \chi_{MA(G,N)} (t) =  \chi_{MA(G_1,N_1)}\cdot\chi_{MA(G_2,N_2)}\cdot...\cdot\chi_{MA(G_k)}$

 \end{corollary}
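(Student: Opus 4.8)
The plan is to prove the corollary by induction on the number $k$ of connected components, using the preceding theorem on the characteristic polynomial of a non-connected graph as the inductive engine. The base case $k=1$ is immediate: then $G$ coincides with its single connected component $G_1$ and $N$ with $N_1$, so $\chi_{MA(G,N)}(t) = \chi_{MA(G_1,N_1)}(t)$ holds trivially. This is the only case needing no machinery, and everything else is reduced to it by repeatedly peeling off one component.

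For the inductive step I would assume the formula for every graph with fewer than $k$ components, and then split $G$ into two pieces in a way that matches the hypothesis of the two-component theorem exactly. Set $V_1 = V(G_1)$ and $V_2 = V(G_2) \cup \dots \cup V(G_k)$, and let $G'$ be the subgraph on $V_2$ with edge set $E(G_2) \cup \dots \cup E(G_k)$, equipped with the numeration $N'$ induced by $N$. Since $G_1$ is a full connected component of $G$, no edge joins $V_1$ to $V_2$; moreover $V_1 \cap V_2 = \emptyset$, $V = V_1 \cup V_2$, and $E$ is the disjoint union of $E(G_1)$ and $E(G')$. This is precisely the situation required by the non-connected graph theorem applied to the splitting $G = G_1 \sqcup G'$, so that theorem yields
\[
\chi_{MA(G,N)}(t) = \chi_{MA(G_1,N_1)}(t) \cdot \chi_{MA(G',N')}(t).
\]

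Now $G'$ has exactly $k-1$ connected components, namely $G_2,\dots,G_k$, and it inherits from $G$ the absence of loops, parallel edges and isolated vertices, since a union of whole connected components of $G$ simply retains the corresponding edges and vertices with their incidences. Hence the inductive hypothesis applies to $G'$ and gives $\chi_{MA(G',N')}(t) = \chi_{MA(G_2,N_2)}(t)\cdots\chi_{MA(G_k,N_k)}(t)$; substituting this into the displayed factorization closes the induction and delivers the full product formula.

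I do not expect a genuine obstacle here, as the corollary is essentially a repeated application of the two-component result; the only points requiring care are bookkeeping ones. Specifically, one must confirm that restricting $N$ to the edges of a union of components is a legitimate numeration of that subgraph, and that the hypotheses ``no loops, no parallel edges, no isolated vertices'' survive the passage to $G'$ so that each $MA(G_i,N_i)$ is well defined. Both checks are routine and immediate from the definition of $MA(G,N)$, which depends only on the edge set and the simple paths and even cycles it supports; none of these can cross between distinct connected components, which is exactly why the arrangement factorizes.
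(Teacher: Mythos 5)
Your proof is correct and follows exactly the route the paper intends: the corollary is obtained by inductively peeling off one connected component at a time and applying the preceding theorem on non-connected graphs at each step. The bookkeeping points you flag (induced numeration, inheritance of the no-loops/no-parallel-edges/no-isolated-vertices hypotheses) are indeed routine and do not affect the argument.
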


\begin{theorem}
(\textit{On the characteristic polynomial in the tree case}): Let graph G(V,E), |E|=n be a tree. Then the characteristic polynomial of MA(G,N) is equal to $\chi_{MA(G,N)} (t) = (t-1)(t-2)...(t-n) $.

\end{theorem}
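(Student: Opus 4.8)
The plan is to exhibit $MA(G,N)$ as linearly isomorphic to the graphical (braid) arrangement of the complete graph $K_{n+1}$ sitting in its essential $n$-dimensional subspace, and then to read off the characteristic polynomial from the chromatic polynomial of $K_{n+1}$ together with the assertion relating a central arrangement to its matroid.

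First I would record the combinatorial structure. Since $G$ is a tree it has $n+1$ vertices and contains no cycles, so the set $F$ consists only of simple paths; moreover in a tree a simple path is determined by its unordered pair of endpoints, and every pair of distinct vertices is joined by a unique path. Hence $MA(G,N)$ has exactly $\binom{n+1}{2}$ hyperplanes, one per pair of vertices, matching the edge count of $K_{n+1}$. The single-edge paths give the hyperplanes $x_i=0$, whose normals form the standard basis, so $MA(G,N)$ is central of rank $n$.

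The technical heart is a change of coordinates that removes the alternating signs. Root the tree at a vertex $v_0$ and let $d(v)$ denote the depth of $v$. Define linear functionals $Z_v$ on $\mathbb{R}^n$ by $Z_{v_0}=0$ and, for each edge $f=(p,c)$ with $p$ the parent and $c$ the child, $Z_c = Z_p + (-1)^{d(c)} x_f$. The claim to verify is that for the path between any two vertices $a,b$ the telescoped difference satisfies $Z_a - Z_b = (-1)^{d(a)}\bigl(x_{f_1}-x_{f_2}+x_{f_3}-\cdots\bigr)$, i.e.\ exactly $\pm$ the defining form of that path's hyperplane. The observation that makes this work is that along any path consecutive vertices differ in depth by exactly $1$, so $(-1)^{d(\cdot)}$ already alternates along the path, and the contribution of the $i$-th edge comes out as $(-1)^{d(a)}(-1)^{i-1}$ whether that step ascends toward the lowest common ancestor or descends away from it. Because each edge is the parent-edge of a unique vertex, the assignment $x \mapsto (Z_v)_{v\neq v_0}$ is a unitriangular, hence invertible, linear map $\mathbb{R}^n\to\mathbb{R}^n$. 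Under it the hyperplane of the path with endpoints $a,b$ is carried to $\{Z_a=Z_b\}$: to $\{Z_i=Z_j\}$ when $a,b\neq v_0$, and to $\{Z_i=0\}$ when one endpoint is $v_0$.

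Finally I would identify the target and compute. The image arrangement $\{Z_i=0\}\cup\{Z_i=Z_j\}$ in $\mathbb{R}^n$ is precisely the graphical arrangement of $K_{n+1}$ on vertices $v_0,\dots,v_n$ after fixing $Z_{v_0}=0$; equivalently its matroid is the cycle matroid $M(K_{n+1})$. By the assertion on graphical arrangements its characteristic polynomial (before essentializing) is the chromatic polynomial of $K_{n+1}$, namely $t(t-1)\cdots(t-n)$, and since that arrangement is central of rank $n$ in $\mathbb{R}^{n+1}$ the relation $\chi_A(t) = t^{(n+1)-n}\chi_{M_A}(t)$ yields $\chi_{M(K_{n+1})}(t) = (t-1)(t-2)\cdots(t-n)$. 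The invertible linear map preserves linear independence among normals, so $M_{MA(G,N)}\cong M(K_{n+1})$; as $MA(G,N)$ is central of rank $n$ in $\mathbb{R}^n$, we get $\chi_{MA(G,N)}(t) = t^{0}\,\chi_{M_{MA(G,N)}}(t) = (t-1)(t-2)\cdots(t-n)$. The one genuinely delicate step is the sign bookkeeping of the third paragraph — verifying that the depth-parity signs make $Z_a-Z_b$ reproduce the alternating form of each path exactly, including across the lowest common ancestor where the path switches from ascending to descending; everything afterward is a routine transfer through the linear isomorphism.
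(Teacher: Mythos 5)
Your proposal is correct and follows essentially the same route as the paper: both identify $MA(G,N)$ with the graphical arrangement of $K_{n+1}$ via a linear isomorphism whose signs come from the bipartition of the tree (your depth parity $(-1)^{d(v)}$ is exactly the paper's black/white $2$-colouring), and then read off $(t-1)\cdots(t-n)$ from the chromatic polynomial of $K_{n+1}$. The only cosmetic difference is that you essentialize into $\mathbb{R}^n$ by setting $Z_{v_0}=0$, whereas the paper maps into the hyperplane $x_1+\cdots+x_{n+1}=0$ in $\mathbb{R}^{n+1}$ and divides out the extra factor of $t$ at the end.
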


\begin{proof}[Proof]
Since G is a tree, it has n+1 vertices. Let $K_{n+1}$ be a complete graph with n+1 vertices and let vertices of $K_{n+1}$ and G have a one-to-one correspondence between each other, specifically let a vertex $v_i$ of G correspond to a vertex $v^i$ of $K_{n+1}$.

Let $M_G (S_G, J_G)$ be a matroid of $MA(G,N)$.Elements of $M_G$ are chosen in s way similar to one in the beginning of theorem 1. The partially ordered set of $MA(G,N)$ is isomorphic to the partially ordered set of $M_G$. Let $e_1,e_2,..e_n$ be the basis of the space $R^n$ that contains $MA(G,N)$, such that each vector $e_i$ corresponds to an edge $r_i$ in G. Each element of $S_G$ is a normal vector to a hyperplane from $MA(G,N)$, and it can be written as $e_{i_1} - e_{i_2} + e_{i_3} ... +e_{i_m}$, where $e_{i_1},e_{i_2},...,e_{i_k}$ is as sequence of vectors, such that $r_{i_1},r_{i_2},...,r_{i_k}$ is a simple path in G.

Let $\{e^1,e^2,...,e^{n+1}\}$ be the standard basis in the space that contains $A_K$. The following step is to construct a linear operator with a following property. This operator will convert elements of $S_G$ into elements of $R^{n+1}$, such that an element corresponding to a simple path in G that connects vertices $v_i$ and $v_j$ will be converted to an a normal vector of a hyperplane that corresponds to an edge in $K_{n+1}$ that is incident to $v^i$ and $v^j$. This property will further be referred to as Property 1. 

Each vector from  $\{e_1,e_2,e_3,...,e_n\}$ corresponds to a simple path of length 1 in G ,i.e. and edge in G. Let all vertices of G be coloured in black and white, such that adjacent vertices are coloured in different colours. Since G is a tree, such colouring is possible. The following is the definition of the function $F:\{e_1,e_2,e_3,...,e_n\} \rightarrow R^{n+1}$. If  $e_i$ corresponds to an edge that is incident to vertices $v_i$ and $v_j$,where $v_i$ is black, and $v_j$ is white , then $F(e_i) = e^i - e^j$.

Let $v_{i_1},v_{i_2},v_{i_3},...v_{i_{t+1}}$ be a sequence of vertices of an arbitrary simple path in G, an let $r_{i_1},r_{i_2},r_{i_3},...r_{i_t}$ be a sequence of edges of the same path.
Let vertex $v_{i_1}$ be a black vertex. The following is the proof by induction that $F(e_{i_1}) -F(e_{i_2}) + F(e_{i_3}) - F(e_{i_4})+... F(e{i_{t+1}})) = e^{i_1} - e^{i_{t+1}}$.

Base case: t=1. $F(e_{i_1}) = e^{i_1} - e^{i_t+1}$ by definition of F.

Inductive step. Let $F(e_{i_1}) -F(e_{i_2}) + F(e_{i_3}) - F(e_{i_4})+...F(e{i_k}) = e^{i_1} - e^{i_{k+1}}$ stand for every simple path of length k in G. To prove that $F(e_{i_1}) -F(e_{i_2}) + F(e_{i_3}) - F(e_{i_4})+...F(e{i_{k+1}}) = e^{i_1} - e^{i_{k+2}}$. stands for every simple path of length k+1, the only two possible cases are considered:

\begin{enumerate}
    \item k+1 is odd and $v_{i_{k+1}}$ is black. Then by induction hypothesis $F(e_{i_1}) -F(e_{i_2}) + F(e_{i_3}) - F(e_{i_4})+...-F(e_{i_k})+F(e_{i_{k+1}}) = e^{i_1} - e^{i_{k+1}} + e^{i_{k+1}} - e^{i_{k+2}} = e^{i_1} - e^{i_{k+2}}  $
    \item k+1 is even and $v_{i_{k+1}}$ is white. Then by induction hypothesis $e_{i_1} -e_{i_2} + e_{i_3} - e_{i_4}+...+e_{i_k}-e_{i_{k+1}} =e^{i_1} - e^{i_{k+1}} - (e^{i_{k+2}} - e^{i_{k+1}}) = e^{i_1} - e^{i_{k+2}}  $.
\end{enumerate}

Inductive step is proved. The similar proof works for the case when $F(e_{i_1})$ is white. This result allows to redefine F as linear operator that converts $R^n$ into the subset of $R^{n+1}$, such that it has the Property 1.

A subgraph of $K_{n+1}$ that consists of edges corresponding to $\{F(e_1),F(e_2),F(e_3),...,F(e_n)\}$ and graph G are isomorphic, because an edge $(v^j,v^k)$ corresponds to $F(e_i)$ if and only if a path that connects $v_j$ and $v_k$ corresponds to $e_i$, and paths that correspond to vectors $e_i$ are edges. Now assume that there is a non-trivial linear combination of vectors $\{F(e_1),F(e_2),F(e_3),...,F(e_n)\}$ that is equal to 0. Let $W\subseteq \{F(e_1),F(e_2),F(e_3),...,F(e_n)\} $ be a set of vectors that have non-zero coefficients in such combination. A subgraph of $K_{n+1}$ that consists of edges corresponding to elements of W is a tree, therefore it contains a vertex $v^i$ of degree 1. It means that when that non-trivial combination is written as a combination of $e^1,e^2,..e^{n+1}$, an element $e^i$ will appear exactly once, therefore that non-trivial combination cannot be equal to 0.

All elements of the image $F(S_G)$ belong to a hyperplane $H:x_1+x_2+...+x_{n+1} = 0$. H is of dimension n, therefore $\{F(e_1),F(e_2),F(e_3),...,F(e_n)\}$ is a basis in H, which in turn means that F is a linear isomorphism between $R^n$ and H, since it converts a basis into a basis.
As a result,vectors $\{d_1,d_2,d_3,...d_l\}\supseteq S_G$ are linear independent, if and only if vectors $\{F(d_1),F(d_2),F(d_3),...F(d_l)\}$ are linear independent.

Let $M_K(S_K, J_K)$ be the matroid of $A_K$ such that $S_K$ is the image $F(S_G)$ .By construction F is an isomorphism between $M_G$ and $M_K$.

Since $M_G$ and $M_K$ are isomorphic, posets $L(M_G)$ and $L(M_K)$ are isomorphic. An isomorphism between $L(M_G)$ and $L(M_K)$ will be denoted as $\hat F$

By definition,
\[\chi_{M_G} (t) = \sum_{x \in L(M_G)} \mu(x)t^{rk(M_G)-rk(x)}\],
\[\chi_{M_K} (t) = \sum_{x \in L(M_K)} \mu(x)t^{rk(M_K)-rk(x)}\].

Let x be an arbitrary element of $L(M_G)$,and let  $\hat F(x) $ be a corresponding element of $L(M_K)$. Since $\mu(x) = \mu(\hat F(x)), rk(x) = rk(\hat F(x)), rk(M_G) = rk(M_K)$, elements of the corresponding sums for x and $\hat F(x)$ in the corresponding formulas are equal, therefore polynomials $\chi_{M_G}$ and $\chi_{M_K}$ are equal.

 Vectors $\{e_1,e_2,...,e_n\}$ and $\{F(e_1),F(e_2),...,F(e_n)\}$ are basis vectors of spaces that contain normal vectors to hyperplanes of  $MA(G,N)$ and $A_K$ respectively, therefore $rk(MA(G,N)) = rk(A_K) = n$. $MA(G,N)$ belongs to $R^n$,$A_K$ belongs to $R^{n+1}$, which means that $\chi_{MA(G,N)} (t) = t^{n-r(MA(G,N))} \cdot \chi_{M_G} (t) = \chi_{M_G} (t), \chi_{A_K} (t) = t^{n+1-r(A_K)} \cdot \chi_{M_K} (t) = t \cdot \chi_{M_K} (t) $, therefore $t \cdot \chi_{A_G} (t) = \chi_{A_K} (t)$. The polynomial $\chi_{A_K} (t)$is equal to the chromatic polynomial of $K_{n+1}$. Since each two vertices of $K_{n+1}$ are adjacent,in proper colouring all vertices are coloured in different colours. With t>n such colouring can be performed in $t(t-1)(t-2)...(t-n)$ ways. This formula is the chromatic polynomial of $K_{n+1}$.Therefore,$\chi_{A_K} (t) = t(t-1)(t-2)...(t-n)$.As a result, $\chi_{MA(G,N)} (t) = (t-1)(t-2)...(t-n) $.
\end{proof}

\begin{lemma}
Let $G_1$ and $G_2$ be two trees with n+1 vertices and n edges. Let $\{e_1, e_2,...,e_n\}$ be a standard basis of n-dimensional space that contains $MA(G_1,N_1)$, such that vectors from this basis correspond to edges of $G_1$. Set $\{e^1,..e^n\}$ is defined for $G_2$ and $MA(G_2,N_2)$ the same way. Let u be a vertex of $G_1$, and let v be a vertex of $G_2$.
The elements of the matroid $M_1$ of the arrangement $MA(G_1,N_1)$ are chosen the following way. If an arrangement has an equation $x_i = 0$, then $e_i$ will be chosen as its normal vector. If an arrangement corresponds to a path that begins in u then the vector $e_{i_1} - e_{i_2} + e_{i_3}-...$, where $e_{i_1}$ corresponds to an edge that is incident to u, will be chosen as its normal vector. For any other hyperplane one of two vectors $e_{i_1} - e_{i_2} + e_{i_3}-...$ will be chosen arbitrarily. 

Then there is a way to choose the elements for the matroid $M_2$ of the arrangement $MA(G_2,N_2)$, such that there is an isomorphism F between $M_1$ and $M_2$, such that if  $a \in M_1$ corresponds to a simple path that begins in u, then F(a) corresponds to a simple path that begins in v.
\end{lemma}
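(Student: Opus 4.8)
The plan is to recognise both matroids as copies of the cycle matroid of the complete graph $K_{n+1}$, exactly as in the proof of the theorem on the characteristic polynomial in the tree case, and then transport one into the other by a vertex relabelling that sends $u$ to $v$. Since $G_1$ and $G_2$ are trees, each hyperplane of $MA(G_i,N_i)$ is the hyperplane of the unique simple path joining an unordered pair of distinct vertices, so the hyperplanes of $MA(G_1,N_1)$ biject with the pairs $\{a,b\}$ of vertices of $G_1$, and similarly for $G_2$. First I would invoke the linear isomorphism constructed in that theorem (there called $F$; write it $\Phi_1$ for $G_1$), which sends the chosen normal vector of the path with endpoints $\{a,b\}$ to $\pm(e^a-e^b)$ in $R^{n+1}$. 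The image of the ground set of $M_1$ is then the set of all $\pm(e^a-e^b)$, whose matroid is precisely the cycle matroid of $K_{V(G_1)}$, with the path $\{a,b\}$ identified with the edge $\{a,b\}$. This identification is insensitive to the signs chosen for the normal vectors, because negating a vector changes no linear (in)dependence; in particular the prescribed sign for paths beginning at $u$ and the forced $+e_i$ for single edges do not affect $M_1$. Applying the same reduction to $G_2$ shows that, for \emph{any} admissible choice of its normal vectors, $M_2$ is isomorphic to the cycle matroid of $K_{V(G_2)}$.

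Next I would fix any bijection $\sigma:V(G_1)\to V(G_2)$ with $\sigma(u)=v$, which exists since both trees have $n+1$ vertices. Relabelling by $\sigma$ is a graph isomorphism $K_{V(G_1)}\to K_{V(G_2)}$, hence an isomorphism $\sigma_\ast$ of cycle matroids sending the edge $\{a,b\}$ to $\{\sigma(a),\sigma(b)\}$. I would then choose the elements of $M_2$ to be the normal vectors of the $G_2$-paths with endpoints $\{\sigma(a),\sigma(b)\}$ as $\{a,b\}$ ranges over the pairs of $G_1$-vertices (with the orientation convention that a path beginning at $v$ starts with the edge incident to $v$, mirroring the convention at $u$); since $\sigma$ is a bijection these pairs range over all pairs in $G_2$, so this assigns exactly one normal vector to each hyperplane of $MA(G_2,N_2)$. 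Defining $F$ to send the $G_1$-normal vector of $\{a,b\}$ to the $G_2$-normal vector of $\{\sigma(a),\sigma(b)\}$, the two identifications with cycle matroids turn $F$ into $\sigma_\ast$, so $F$ is a matroid isomorphism: a family of $G_1$-paths has independent normal vectors iff the corresponding $K$-edges form a forest iff their $\sigma$-images form a forest iff the corresponding $G_2$-paths have independent normal vectors. The path-beginning property is then immediate, since a path begins at $u$ precisely when $u$ is an endpoint, i.e. its endpoint pair is $\{u,w\}$, and $F$ sends this to $\{v,\sigma(w)\}$, a path beginning at $v$.

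I expect the main difficulty to be bookkeeping rather than conceptual: one must keep the sign and orientation conventions of the normal vectors strictly separate from the underlying matroid, and notice that $\sigma$ need not carry edges of $G_1$ to edges of $G_2$, so a length-one path (a hyperplane $x_i=0$) of $G_1$ may be sent to a longer path of $G_2$. This is harmless precisely because $F$ is defined at the level of endpoint pairs, i.e. edges of $K_{n+1}$, where path length is invisible; verifying that this $K_{n+1}$-reduction really does make the independence structures coincide is the one step that deserves care.
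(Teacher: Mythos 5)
Your argument is correct, but it proceeds by a genuinely different route than the paper. The paper's proof stays inside $R^n$: it takes the $n$ vectors $c_1,\dots,c_n$ corresponding to the paths starting at $u$, proves directly that they are linearly independent (a leaf-edge argument on the subtree covered by the paths with nonzero coefficients), observes that every other element of $S_1$ is of the form $c_i-c_j$, and then defines $F$ as the linear isomorphism $c_i\mapsto c^i$, choosing $S_2$ to consist of the vectors $c^i$ and $c^{i_1}-c^{i_2}$. You instead reuse the embedding $\Phi$ into the hyperplane $x_1+\dots+x_{n+1}=0$ of $R^{n+1}$ from the preceding theorem, identify both $M_1$ and $M_2$ with the cycle matroid of $K_{n+1}$ (indexing hyperplanes by endpoint pairs, which is legitimate for trees since each pair of vertices determines a unique simple path and hence a unique hyperplane), and transport one to the other by a vertex bijection $\sigma$ with $\sigma(u)=v$. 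Your route buys conceptual clarity and reuses already-proved machinery --- independence becomes the forest condition in $K_{n+1}$, and the sign conventions visibly cannot matter --- while the paper's route buys an explicit \emph{linear} isomorphism of $R^n$ together with an explicit description of $S_2$.

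One point deserves attention because of how the lemma is used afterwards: the remark immediately following the lemma refers to ``the matrix of $F$ for bases $e_1,\dots,e_n$ and $e^1,\dots,e^n$,'' so the subsequent theorem on tails needs $F$ to be (the restriction of) a linear isomorphism of $R^n$. Your $F$ is defined only as a bijection of ground sets; with your fixed sign convention on $S_2$ it agrees with the linear map $\Phi_2^{-1}\circ\sigma_*\circ\Phi_1$ only up to signs on individual elements, and therefore need not itself extend linearly. This does not affect the lemma as stated (the paper's notion of matroid isomorphism is just an independence-preserving bijection), and the fix is immediate --- take $S_2$ to be the image of $S_1$ under $\Phi_2^{-1}\circ\sigma_*\circ\Phi_1$ rather than imposing your own sign convention --- but as written your proof would leave the later remark without a matrix to refer to.
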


\begin{proof}[Proof]
Let  $c_1,c_2,...,c_n$ be the elements of $M_1$ that correspond to simple paths in $G_1$ that begin in u, and can be written as $e_{i_1} - e_{i_2} + e_{i_3}-...$, where $e_{i_1}$ corresponds to an adjacent to u edge . There are exactly n of these vectors, because for each vertex of $G_1$ that is different from u there is exactly one simple path that connects it to u. 
Let there be a non-trivial linear combination J of vectors $c_1,c_2,...,c_n$ that is equal to 0. Let  $c_{i_1}, c_{i_2}...$ be vectors that have non-zero coefficients in this combination . Let $\hat G_1$ be a subgraph of $G_1$, in which every vertex  and every edge belongs to at least one path that corresponds to one of the vectors $c_{i_1}, c_{i_2}...$. $\hat G_1$ is a tree. Let r be an edge that is incident to a vertex of degree 1 in $\hat G_1$. Among all paths that correspond to vectors $c_{i_1}, c_{i_2}...$ there is only one path that contains this edge. This means that if all vectors $c_{i_1}, c_{i_2}...$ are written as linear combinations of vectors  
$e_1, e_2,...,e_n$, only one of vectors $c_{i_1}, c_{i_2}...$ will have a corresponding to r vector $e_r$ in its combination. Therefore, this vector cannot have a non-zero coefficient in J, because if J is rewritten as a linear combination of $e_1, e_2,...,e_n$, vector $e_r$ will not be reduced. Therefore, there is a contradiction, which means that J does not exist, and vectors $c_1,c_2,...,c_n$ 
are linearly independent.As a result, vectors $c_1,c_2,...,c_n$ form a basis in $R^n$
Let $c_i$  be a vector corresponding to a path between u and $u_i$, and let $c_j$
be a vector corresponding to a path between u and $u_j$, where  $u_i$ and $u_j$ are different from u and from each other. Without loss of generality, let $c_i = e_{i_1} -e_{i_2} + ... +e_{i_k} - e_{j_1} + e_{j_2} -...+ e_{j_m}$, and let $c_j = e_{i_1} -e_{i_2} + ... +e_{i_k} - e_{l_1} + e_{l_2} -...+ e_{l_p}$. Then $c_i - c_j = e_{j_m} - e_{j_{m-1}} +...+e_{j_2} - e_{j_1} + e_{l_1} - e_{l_2} +...-e_{l_p} $,
which is an element of $M_1$ that corresponds to a path between $u_1$ and $u_2$.

As a result, if a path in $G_1$ begins in u, then it corresponds to a vector $c_i$ in $S_1$, otherwise it corresponds to a vector $c_i - c_j$. Let f be a bijective map between vertices of $G_1$ and $G_2$, such that $f(u) = v$. Let $c^1,c^2,...,c^n$ be normal vectors to hyperplanes from $MA(G_2,N_2)$ that correspond to paths in $G_2$ that begin in v, with the following properties:
\begin{enumerate}
    \item 1) $c^i = e^{j_1} - e^{j_2} + e^{j_3} ...$, where $e^{j_1}$ corresponds to an edge that is incident to v.
    \item 2) if $c_i$ corresponds to a path between u and $u_j$, then $c^i$ corresponds to a path between v and $f(u_i)$, where $u_j$ is an arbitrary vertex of $G_1$
    \end{enumerate}
Elements $M_2$ will be chosen the following way. All vectors $c_i$ will be chosen as normal vectors to corresponding hyperplanes. Moreover, if a vector $c_{i_1} - c_{i_2}$ is chosen for a path between  $u_1$ and $u_2$ in $G_1$,then a vector $c^{i_1} - c^{i_2}$ will be chosen for a path between $f(u_1)$ and $f(u_2)$ in $G_2$.

A linear isomorphism F,such that $F(c_i) = c^i$ for every i, defines the desired isomorphism between $M_1$ and $M_2$.
\end{proof}

\begin{remark}
In the next theorem a matrix A will be used. A is a matrix of F for bases $e_1,....,e_n$ and $e^1,....,e^n$.
\end{remark}

\begin{theorem}
(\textit{On tails}): Let $G_1$ and $G_2$ be graphs without loops and parallel edges, such that there are subgraphs $H_1 \subset G_1, T_1 \subset G_1, H_2 \subset G_2, T_2 \subset G_2$ with the following properties:
\begin{enumerate}
    \item $G_1 = H_1 \cup T_1, G_2 = H_2 \cup T_2$
    \item $H_1 \cap T_1 = \{u\}, H_2 \cap T_2 = \{v\}$, where u is a vertex in $G_1$,and v is a vertex in $G_2$
    \item there is an isomorphism F between $H_1$ and $H_2$,that maps u into v
    \item $T_1$ and $T_2$ are trees with the same number of edges, and they both have at least one edge
\end{enumerate}

Then $\chi_{MA(G_1,N_1)} = \chi_{MA(G_2,N_2)}$.
\end{theorem}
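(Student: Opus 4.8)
The plan is to reduce everything to an isomorphism of the matroids $M_1,M_2$ of $MA(G_1,N_1)$ and $MA(G_2,N_2)$. Both arrangements are central of full rank $n := |E(H_1)|+|E(T_1)| = |E(H_2)|+|E(T_2)|$ (every edge contributes a hyperplane $x_i=0$, so all basis vectors occur as normals), so by the Assertion relating a central arrangement to its matroid one has $\chi_{MA(G_i,N_i)}=\chi_{M_i}$; and isomorphic matroids have isomorphic flat posets, hence equal M\"obius and rank functions and equal characteristic polynomials. By Theorem 3 I am free to choose the numerations, so I would number the edges of $T_1$ first and those of $H_1$ afterwards, number $H_2$ through the isomorphism $F$, and number $T_2$ as in Lemma 1 (with $u,v$ as the distinguished vertices). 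The first combinatorial observation is that, because $H_i\cap T_i$ is the single vertex $u$ (resp.\ $v$), every simple path or even cycle of $G_i$ is of exactly one of three types: contained in $H_i$, contained in $T_i$, or a \emph{crossing} path that meets the junction vertex exactly once and therefore splits into one contiguous tail subpath and one contiguous head subpath sharing only that vertex. A path or cycle using both sides twice would revisit $u$, and an even cycle cannot cross at all.

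Next I would build a single invertible linear operator $\Phi=\Phi_T\oplus\Phi_H\colon R^{n}\to R^{n}$, block-diagonal with respect to the tail/head splitting of coordinates. For the head block I take $\Phi_H$ to be the permutation induced by $F$ with all signs $+1$, i.e.\ $\Phi_H(e_r)=e_{F(r)}$. Because a graph isomorphism preserves the order of edges along a path, $\Phi_H$ carries the alternating-sum normal of every head path or even cycle exactly to the normal of its $F$-image, so no nontrivial sign choice is needed there. For the tail block I take $\Phi_T$ to be the linear isomorphism furnished by Lemma 1 (its matrix $A$), which maps the normal of every tail path to the normal of a tail path and, crucially, sends paths beginning at $u$ to paths beginning at $v$. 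Thus $\Phi$ already realizes the desired bijection of normals for the head-only and tail-only hyperplanes, and it is invertible since each block is.

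The heart of the argument is the crossing paths, and this is where I expect the real difficulty. A crossing path with $s$ tail edges has normal vector equal, up to the global sign $(-1)^{s-1}$ coming from reading the tail subpath outward from $u$, to $\tilde T - H$, where $\tilde T$ is the normal of the tail subpath beginning at $u$ and $H$ is the normal of the head subpath beginning at $u$. Here $\tilde T$ is exactly one of the ``begins at $u$'' vectors controlled by Lemma 1 and $H$ is a head path handled by $F$. Since $\Phi$ is block-diagonal, $\Phi(\tilde T - H)=\Phi_T(\tilde T)-\Phi_H(H)=\tilde T' - H'$, where $\tilde T'$ is the corresponding tail path beginning at $v$ and $H'$ the $F$-image head path beginning at $v$; these two subpaths meet only at $v$, so their concatenation is a genuine simple crossing path of $G_2$ whose normal is $\pm(\tilde T' - H')$. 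Hence $\Phi$ maps the line spanned by every $G_1$-normal onto the line of a uniquely determined $G_2$-normal, and this correspondence is a bijection of ground sets, namely the product of the Lemma 1 bijection on tail subpaths with the $F$-bijection on head subpaths. Being induced by an invertible linear map, it preserves linear independence, so it is a matroid isomorphism $M_1\cong M_2$.

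The one genuinely delicate point is the matching of signs and orientations across the junction: the two block maps must agree on how the shared vertex is treated, or the split $\tilde T - H$ would not transport correctly. This is resolved precisely by Lemma 1's normalization fixing tail paths to ``begin at $u$'', which pins the tail orientation at the junction to be the one compatible with the head orientation there, while the head block needs no sign correction at all. With the matroid isomorphism in hand, $L(M_1)\cong L(M_2)$, so $\chi_{M_1}=\chi_{M_2}$, whence $\chi_{MA(G_1,N_1)}=\chi_{MA(G_2,N_2)}$; Theorem 3 then removes the dependence on the chosen numerations, completing the proof.
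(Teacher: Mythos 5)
Your proposal is correct and follows essentially the same route as the paper: a block-diagonal linear isomorphism whose head block is the identity/permutation induced by $F$ and whose tail block is the operator $A$ from the lemma on trees, with the same trichotomy of paths (head-only, tail-only, crossing at the cut vertex) and the same reduction through matroid and poset isomorphism to equality of characteristic polynomials. Your treatment of the sign bookkeeping at the junction is in fact more explicit than the paper's, but the underlying argument is the same.
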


\begin{figure}[h]
     \centering
     \includegraphics[width=0.7\textwidth]{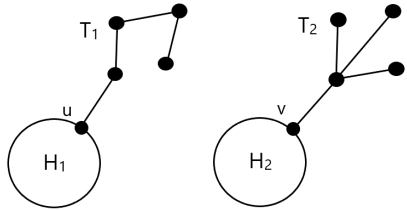}
     \caption{Graphs $G_1$ and $G_2$}
     \label{fig:tail_graph}
 \end{figure}

\begin{proof}[Proof]
  Let $\{e_1,e_2,...e_k,e_{k+1},...e_n\}$ be a basis of space that contains $MA(G_1,N_1)$, such that each $e_i$ corresponds to an edge in $G_1$.Without loss of generality, let $e_1,e_2,...,e_k$ correspond to edges of $H_1$, and let $e_{k+1} ,...,e_n$ correspond to edges of $T_1$. Set $\{e^1,e^2,...e^k, e^{k+1},....e^n\}$ is defined for $G_2$, $H_2$, $T_2$ in the same way , but with condition that for $i<k+1$, if $e_i$ corresponds to an edge r, then $e^i$ corresponds to an edge F(r).
 
Let $M_1(S_1, J_1)$ be the matroid of $MA(G_1,N_1)$. Elements of $M_1$ are chosen the following way. For every hyperplane with an equation $x_i = 0$ vector $e_i$ will be chosen as the normal vector. For every other hyperplane one of two vectors $e_{i_1} - e{i_2} + ...$ is chosen arbitrarily.

 Let A be a matrix of a linear isomorphism from lemma 3 for trees $T_1$ and $T_2$ and vertices u and v. Let U be a linear isomorphism with the following matrix:

\begin{figure}[h]
     \centering
     \includegraphics[width=0.7\textwidth]{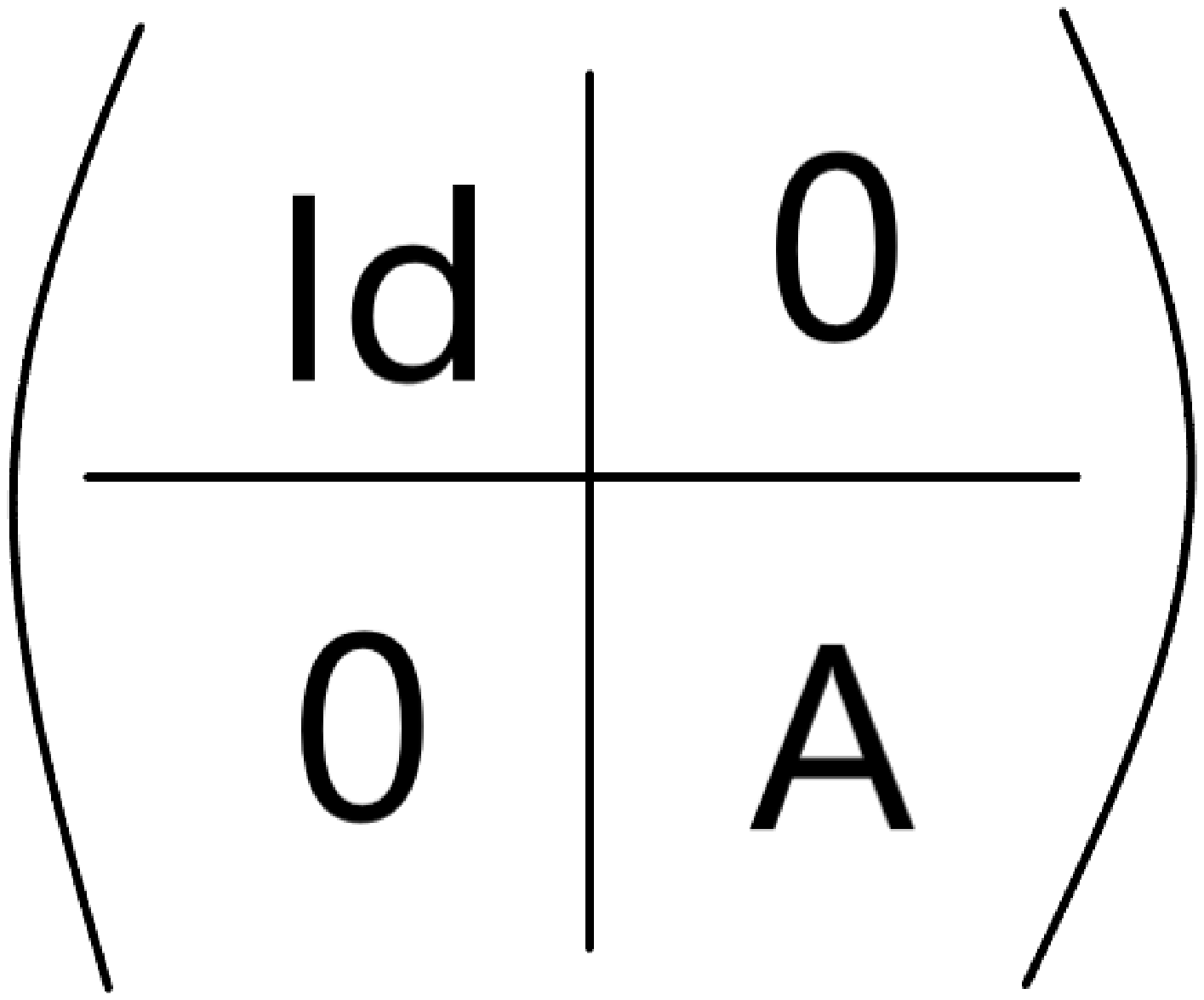}
     \caption{Matrix of U}
     \label{fig:tail_matrix}
 \end{figure}

,where Id  is a matrix of the identity operator, and 0 is a matrix that consists of zeros. Since both Id and A are matrices of linear isomorphisms, U is a linear isomorphism as well.

Let e be an element of $S_1$. If the corresponding to e path consists only of edges that belong to $H_1$ , then by definition of U U(e) is a normal vector to a hyperplane in $MA(G_2,N_2)$ that corresponds to a path in $H_2$. In the same way if all edges of the corresponding to e path lie in $T_1$, then by definition of U U(e) is a normal vector to a hyperplane in $MA(G_2,N_2)$ that corresponds to a path in $T_2$. The last case is when the corresponding path starts in $H_1$ and ends in $T_1$. Without loss of generality, let e be equal to $e_{i_1} + e_{i_2} - .... +e_{i_d} - e_{i_{d+1}}+... +e{i_m}$, where vectors $e_{i_1},...,{e_{i_d}}$ correspond to edges from $H_1$, and vectors $e_{i_{d+1}},...,e_{i_m}$ correspond to edges from $T_1$,with $e_{i_d}$ and $e_{i_{d+1}}$ corresponding to edges that are incident to u. Vector $e_{i_1} + e_{i_2} - .... +e_{i_d}$ corresponds to a simple path in $H_1$, and  $U(e_{i_1} + e_{i_2} - .... +e_{i_d}) = e^{i_1} + e^{i_2} - .... +e^{i_d}$, where $e^{i_d}$ corresponds to an edge in $H_2$ that is incident to v. Vector $e_{i_{d+1}}-... -e{i_m}$ corresponds to a simple path in $T_1$, and   $U(e_{i_{d+1}}-... -e{i_m}) = e^{j_{d+1}}-... e^{j_p}$, where  $e^{j_{d+1}}$ corresponds to and edge in $T_2$ that is incident to v. This means that $U(e_{i_1} + e_{i_2} - .... +e_{i_d} - e_{i_{d+1}}+... +e{i_m}) = e^{i_1} + e^{i_2} - .... +e^{i_d} - e^{j_{d+1}}+... e^{j_p} $, which is a normal vector to a hyperplane in $MA(G_2,N_2)$ that corresponds to a path that begins in $H_2$ and ends in $T_2$.

Let $M_2$ be a matroid of $MA(G_2,N_2)$ whose elements are chosen the following way: $S_2$ is an image of $S_1$  under U. Then U defines a map between $S_1$ and $S_2$. This map is bijective, and since U is a linear isomorphism, this map is an isomorphism of matroids $M_1$ and $M_2$. Since $M_1$ and $M_2$ are isomorphic, posets of matroids $M_1$ and $M_2$ are isomorphic, which in turn means that posets of arrangements $MA(G_1,N_1)$ and $MA(G_2,N_2)$ are isomorphic. Arrangements $MA(G_1,N_1)$ and $MA(G_2,N_2)$ belong to spaces of the same dimension, so $\chi_{MA(G_1,N_1)} = \chi_{MA(G_2,N_2)}$.

\end{proof}

Author is grateful to  A.A.Irmatov for the formulation of the problem and for valuable discussions of these results.

\clearpage

\end{document}